\begin{document}
\title{Structure theorems above a strongly compact cardinal}
\author{Gabriel Goldberg\\ Evans Hall\\ University Drive \\ Berkeley, CA 94720}
\maketitle
\begin{abstract}
While many inner model theoretic combinatorial principles are incompatible with large cardinal axioms, on some rare occasions, large cardinals actually imply that the structure of the universe of sets is analogous to the canonical inner models. This note provides two new instances of this phenomenon.
\end{abstract}
\section{Introduction}
There is a deep tension between large cardinal axioms and the strong combinatorial properties typical of canonical inner models. Many such combinatorial principles must fail at certain large cardinals; for example, Jensen 
\cite{JensenKunen} showed that \(\diamondsuit^+(\kappa)\) fails at every ineffable cardinal \(\kappa\), while \(\square(\kappa)\) fails at every weakly compact cardinal \(\kappa\) by \cite{Jensen}. Despite this inherent tension, large cardinal axioms are occasionally found to have consequences that are highly analogous to the properties of the known canonical inner models. A typical example is a theorem of Kunen \cite{JensenKunen} that states that \(\diamondsuit(\kappa)\) holds at every subtle cardinal \(\kappa\). Perhaps the most famous example is due to Solovay \cite{Solovay}: the Singular Cardinals Hypothesis holds above the first strongly compact cardinal. In this short paper, we will be concerned with two more instances of inner model-like behavior above the first strongly compact cardinal.

Given Kunen's theorem that \(\diamondsuit(\kappa)\) holds at every subtle cardinal \(\kappa\), it is natural to ask whether a weaker large cardinal notion suffices. The most basic question in this vein is whether \(\diamondsuit(\kappa)\) necessarily holds at every inaccessible cardinal \(\kappa\). In early unpublished work, Woodin used Radin forcing to answer this question negatively:
\begin{thm}[Woodin]
It is consistent that there is a strongly inaccessible cardinal \(\kappa\) such that \(\diamondsuit(\kappa)\) fails.\qed
\end{thm}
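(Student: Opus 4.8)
The plan is to start from a large cardinal, apply Radin forcing tuned so that the target cardinal \(\kappa\) survives as a strong inaccessible but not as a subtle cardinal, and then use the self‑similar structure of the Radin generic, through a fusion argument, to force that no sequence is a \(\diamondsuit(\kappa)\)-sequence.

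For the setup I would take \(\kappa\) supercompact in a model of \textsf{GCH} (far more than is needed, but it keeps the bookkeeping painless) and extract from the supercompactness a coherent measure sequence \(\vec u\) on \(\kappa\) whose length is chosen large enough that the associated Radin forcing \(R = R_{\vec u}\) preserves the inaccessibility of \(\kappa\), yet not so large that \(\kappa\) remains subtle in the extension. Destroying subtlety is unavoidable here: by Kunen's theorem \(\diamondsuit(\kappa)\) holds at every subtle cardinal, so \(R\) must kill subtlety while keeping \(\kappa\) regular and a strong limit, and arranging precisely this is a standard application of Radin forcing. The properties of \(R\) that I would use are: it has the Prikry property and supports fusion, it is \(\kappa^+\)-c.c.\ (so it preserves cardinals and cofinalities \(\ge \kappa^+\) and computes \(\mathcal{P}(\kappa)\) up to \(\kappa^+\)), it adds a closed unbounded set \(C \subseteq \kappa\) and no bounded subsets of \(\kappa\) beyond those computable from \(C\), and it is self‑similar: for each \(\gamma \in C\) there is a factorization \(R \cong R_{\vec u \restriction \gamma} \ast \dot R^{\mathrm{tail}}_\gamma\) with \(R_{\vec u \restriction \gamma}\) essentially the Radin forcing below \(\gamma\), of size \(\le \gamma\), and with \(\dot R^{\mathrm{tail}}_\gamma\) adding no new subsets of \(\gamma\). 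Writing \(G\) for an \(R\)-generic and \(G_\gamma\) for the induced generic for \(R_{\vec u \restriction \gamma}\), this gives \(\mathcal{P}(\gamma)^{V[G]} = \mathcal{P}(\gamma)^{V[G_\gamma]}\), a set of size \(\le \gamma^+\), for every \(\gamma \in C\).

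It suffices to show, working in \(V\), that for every nice \(R\)-name \(\dot{\vec S} = \langle \dot S_\alpha : \alpha < \kappa \rangle\) forced to satisfy \(\dot S_\alpha \subseteq \alpha\), the set of conditions forcing ``\(\dot{\vec S}\) is not a \(\diamondsuit(\kappa)\)-sequence'' is dense; by the chain condition every sequence in a generic extension has such a name, so this gives \(V[G] \models \neg\diamondsuit(\kappa)\) while \(\kappa\) stays strongly inaccessible, which is the theorem. Fix, then, a condition \(p\) and such a name \(\dot{\vec S}\); I want \(q \le p\) and \(R\)-names \(\dot X, \dot E\) with \(q\) forcing that \(\dot E\) is a club and \(\dot X \cap \gamma \neq \dot S_\gamma\) for every \(\gamma \in \dot E\) — for then \(\{\alpha : \dot X \cap \alpha = \dot S_\alpha\}\) is disjoint from the club \(\dot E\) and hence nonstationary. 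By a Löwenheim--Skolem argument exploiting the factorization (the tail decides nothing new below \(\gamma\)), the set \(\dot D\) of \(\gamma \in C\) at which \(\dot{\vec S}\) reflects correctly — meaning \(\langle S_\alpha : \alpha < \gamma \rangle \in V[G_\gamma]\) and \(V[G_\gamma]\) computes \(\mathcal{P}(\gamma)\) and the relevant clubs correctly — is forced to contain a club. One then builds \(q\), \(\dot X\), and \(\dot E \subseteq \dot D\) by a fusion: at each step one extends the current condition, lengthening its stem and shrinking its measure‑one sets, so as to decide one further value of \(\dot X\) and to add one further point of \(C\) to \(\dot E\).

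The crux — and the reason a genuine large‑cardinal input is needed, rather than an ordinary club‑shooting — is ensuring, at each \(\gamma \in \dot E\), that \(\dot X \cap \gamma \neq \dot S_\gamma\). The essential point is that even though \(\langle S_\alpha : \alpha < \gamma \rangle\) reflects into \(V[G_\gamma]\), the single set \(\dot S_\gamma\) need not be decided by \(G_\gamma\): as one varies the part of the generic lying above \(\gamma\), the value of \(\dot S_\gamma\) ranges over (by the factorization, some of the \(\gamma^+\)-many) sets in \(\mathcal{P}(\gamma)^{V[G_\gamma]}\), so, having already committed to \(\dot X \cap \gamma\), one still has room inside the tail to extend the condition and force \(\dot S_\gamma\) away from that one set. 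The fusion must carry this out coherently at all of the club‑many points of \(\dot E\) simultaneously and survive the limit stages, and it is precisely the Radin measure‑one sets that supply the maneuvering room for this — room that a bare closed unbounded set, along which the value of \(X\) at a limit point is rigidly determined and nothing downstream can be adjusted, does not provide. This is exactly why the naive attempt to kill \(\diamondsuit(\kappa)\) fails and why the large cardinal enters. Making this fusion bookkeeping precise, and verifying that the resulting condition forces everything claimed, is the technical heart of the proof; the other ingredients — preservation of inaccessibility, the chain condition, and the factorization lemma — are standard facts about Radin forcing.
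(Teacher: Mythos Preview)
The paper does not actually prove this theorem; it is stated with a terminal \(\qed\) and attributed to unpublished work of Woodin via Radin forcing, with no argument given. So there is no proof in the paper against which to compare your sketch.

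Your overall framework --- Radin forcing with a measure sequence whose length is chosen so that \(\kappa\) stays inaccessible but loses subtlety, followed by a density/fusion argument against each name for a putative \(\diamondsuit\)-sequence --- is indeed the shape of the argument as it appears in later expositions. But the step you correctly flag as the crux is also where your sketch does not yet carry weight. You claim that, having committed to \(\dot X \cap \gamma\), one can always extend in the tail to force \(\dot S_\gamma \ne \dot X \cap \gamma\) because ``varying the tail varies \(\dot S_\gamma\)''; yet nothing you have said rules out that the current condition already forces \(\dot S_\gamma = \dot X \cap \gamma\). Knowing that the possible values of \(\dot S_\gamma\) lie among the \(\le \gamma^+\) subsets of \(\gamma\) in \(V[G_\gamma]\) does not tell you there is more than one value compatible with your condition, let alone one different from the single set \(\dot X \cap \gamma\) you have already fixed. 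In the genuine argument the freedom to diagonalize comes from the way \(\dot X\) itself is tied to the generic and from the precise choice of the length of the measure sequence, and arranging that this freedom persists coherently along a club of \(\gamma\)'s while the fusion still converges to a condition is exactly the technical content you gesture at but do not supply.
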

Whether \(\diamondsuit(\kappa)\) must hold at every weakly compact cardinal \(\kappa\) remains open to this day. More generally, the relationship between compactness principles and guessing principles like \(\diamondsuit\) is still an active area of study (see for example \cite{Omer}). The first theorem of this paper shows that a particularly nice compactness principle that answers these questions for all sufficiently large cardinals:
\begin{repthm}{StronglyCompactDiamond}
Suppose \(\lambda\) is a regular cardinal that lies above the least strongly compact cardinal and satisfies \(2^{<\lambda} = \lambda\). Then \(\diamondsuit(\lambda)\) holds.
\end{repthm}

Our second theorem concerns the structure of indecomposable ultrafilters. The \(\kappa\)-complete ultrafilters on \(\kappa\) are of fundamental interest in set theory, so it is natural to ask about those ultrafilters that fall just short of \(\kappa\)-complete:
\begin{defn}
A uniform ultrafilter \(U\) on a cardinal \(\kappa\) is {\it indecomposable} if whenever \(\bigcup_{\alpha < \nu} S_\alpha\in U\) for some \(\nu < \kappa\), there is a countable set \(\sigma\subseteq\nu\) with \(\bigcup_{\alpha\in \sigma}S_\alpha\in U\).
\end{defn}
Is the concept of an indecomposable ultrafilter really so different from that of a \(\kappa\)-complete ultrafilter on \(\kappa\)? The concepts are not exactly the same, since given a measurable cardinal \(\kappa\), it is easy to build indecomposable ultrafilters on \(\kappa\) that are not \(\kappa\)-complete. Moreover, there exist nonmeasurable cardinals carrying indecomposable ultrafilters: any limit of measurable cardinals of countable cofinality does. In the canonical inner models, these are the only kinds of cardinals that carry indecomposable ultrafilters. 

On the other hand, in one of the first applications of his now-famous notion of forcing, Prikry showed that it is consistent that there is an indecomposable ultrafilter on a cardinal of cofinality \(\omega\) even when there are no measurable cardinals at all. Silver \cite{Silver} then asked whether an inaccessible cardinal that carries an indecomposable ultrafilter is necessarily measurable. It was widely expected that the answer to Silver's question would be negative, and this was confirmed by work of Sheard \cite{Sheard}:
\begin{thm}[Sheard]
It is consistent that there is an inaccessible non-weakly compact cardinal \(\kappa\) that carries an indecomposable ultrafilter.\qed
\end{thm}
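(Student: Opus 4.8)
The plan is to produce the example by forcing over a model with a measurable cardinal -- one measurable should suffice, and is close to optimal by core model considerations, since an indecomposable ultrafilter on an inaccessible cardinal yields an inner model with a measurable cardinal. So fix a measurable cardinal $\kappa$ in $V$, a normal measure $W$ on $\kappa$, and the ultrapower embedding $j\colon V\to M=\mathrm{Ult}(V,W)$. It is worth noting at the outset that $\kappa$ already carries indecomposable ultrafilters that are very far from $\kappa$-complete: for any nonprincipal ultrafilter $D$ on $\omega$ the sum $\sum_D W$ is a uniform, countably incomplete ultrafilter on $\kappa$ which is indecomposable precisely because $W$ is $<\kappa$-complete on each of the $\omega$ fibres. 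What is missing is that $\kappa$ is weakly compact, so the task is to destroy weak compactness by a forcing mild enough that some indecomposable ultrafilter survives.

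For the destruction I would force with $\mathbb{S}$, the standard poset that adds a non-reflecting stationary subset $T$ of $\{\alpha<\kappa:\mathrm{cf}(\alpha)=\omega\}$, whose conditions are bounded approximations ordered by end-extension; the forcing that adds a $\square(\kappa)$-sequence would serve equally well, in view of Jensen's theorem quoted in the introduction. One checks in the usual way that $\mathbb{S}$ adds no new bounded subsets of $\kappa$ and no new $<\kappa$-sequences of ordinals, is $\kappa^{+}$-c.c., and therefore preserves all cardinals and cofinalities and keeps $\kappa$ strongly inaccessible; but $T$ witnesses the failure of stationary reflection at $\kappa$, so $\kappa$ is no longer $\Pi^1_1$-indescribable, and in particular no longer weakly compact, in $V[G]$.

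The heart of the matter, and the step I expect to be the main obstacle, is to exhibit an indecomposable ultrafilter on $\kappa$ in $V[G]$, and here the two obvious strategies both fail. First, $j$ cannot be lifted: no condition of $j(\mathbb{S})$ of length past $\kappa$ is compatible with the whole of $G$, since such a condition would have to agree below $\kappa$ with the characteristic function of $T$ -- an object not in $M$ -- and additionally make $T$ non-stationary in $\kappa$, which it is not. Second, the filter generated over $V[G]$ by a ground-model ultrafilter is not an ultrafilter: $\mathbb{S}$ adds subsets of $\kappa$ that no ground-model measure-one set is contained in or disjoint from; indeed, if a ground-model $\kappa$-complete ultrafilter still generated an ultrafilter then $\kappa$ would be measurable in $V[G]$, contradicting the failure of weak compactness, and one checks that even $\sum_D W$ fails to generate an ultrafilter. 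So the ultrafilter must be built in $V[G]$ directly. What keeps this in the realm of possibility is that we only need a countably incomplete ultrafilter, so the associated embedding need not be continuous at $\omega$ and the obstruction to lifting $j$ need not recur; concretely I would attempt a Prikry-style fusion of length $\omega$, interleaving the $\mathbb{S}$-names for partitions of $\kappa$ into fewer than $\kappa$ pieces with the $\omega$ factors of a sum of $W$ and using the closure of $\mathbb{S}$ under decreasing $\omega$-sequences to diagonalise, so that the resulting ultrafilter captures every such new partition on countably many of its pieces. Proving the requisite Prikry-type decomposition lemma for $\mathbb{S}$ -- one that decides each new partition on a small set without inadvertently delivering $\sigma$-completeness, which would again clash with the non-weak-compactness of $\kappa$ -- is where the particular combinatorics of the non-reflecting-set forcing, and the choice of which ground-model ultrafilter to sum, have to be exploited. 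A variant worth trying, parallel to Woodin's proof of Theorem 1, is to replace $\mathbb{S}$ by Radin forcing at $\kappa$ with a short measure sequence, keeping $\kappa$ inaccessible but not weakly compact and attempting to read an indecomposable ultrafilter off the generic club; the same diagonalisation difficulty reappears there.
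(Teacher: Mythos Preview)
The paper does not prove this theorem; it is quoted from Sheard with a \qed and a citation, so there is no in-paper argument to compare against and your proposal must stand on its own.

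Your setup is reasonable and your diagnosis of the obstacles is honest and correct: the embedding $j$ genuinely cannot be lifted through the non-reflecting-stationary-set forcing $\mathbb{S}$, and no ground-model ultrafilter on $\kappa$ generates an ultrafilter in $V[G]$. But the proposal halts exactly where the real work begins. You explicitly defer the construction of the indecomposable ultrafilter in $V[G]$ to an unproved ``Prikry-type decomposition lemma'' for $\mathbb{S}$, and there is no evident reason such a lemma should hold. The poset $\mathbb{S}$ has no Prikry structure: its conditions are closed bounded approximations to a stationary set, with no measure-one side condition built in and nothing to diagonalise against. A fusion along the $\omega$-closure of $\mathbb{S}$ yields a single condition deciding countably many statements, not an ultrafilter measuring all $2^\kappa$ new subsets of $\kappa$; and you yourself note that the resulting object must avoid being $\sigma$-complete, which a straight closure argument would not arrange. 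The alternative you float --- short-sequence Radin forcing --- is left at the same point: you name the difficulty but do not resolve it. As written this is an outline of obstructions rather than a proof sketch, and the gap is precisely the theorem. If you want to reconstruct Sheard's argument, the moral of Silver's factorisation (\cref{SilverFactor} here) is a better guide than a head-on fusion: one should arrange the extension so that the obstruction to lifting $j$ disappears after first passing to an ultrapower by an ultrafilter on $\omega$, and then read the indecomposable ultrafilter off the composite embedding.
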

The main result of this paper, however, states that above the first strongly compact cardinal, Silver's question actually has a positive answer:
\begin{repthm}{MainThm}
Suppose \(\kappa\) lies above the least strongly compact cardinal and carries an indecomposable ultrafilter. Then \(\kappa\) is either measurable or a limit of measurable cardinals of countable cofinality. 
\end{repthm}
Thus once again, the structure of \(V\) above the first strongly compact cardinal resembles the structure of the canonical inner models.
\section{\(\diamondsuit\) at inaccessible cardinals}
We will prove \cref{StronglyCompactDiamond} using some observations about diamond principles on ultrafilters.
\begin{defn}
Suppose \(F\) is a filter on a cardinal \(\lambda\). 
\begin{description}
\item[\(\diamondsuit(F)\):] there is a sequence of sets \(\langle S_\alpha : \alpha < \lambda\rangle\) such that for every set \(A\subseteq \lambda\), \(\{\alpha < \lambda: A\cap \alpha =  S_\alpha\}\in F^+\).
\item[\(\diamondsuit^-(F)\):] there is a sequence of sets \(\langle \mathcal S_\alpha : \alpha < \lambda\rangle\) with \(|\mathcal S_\alpha| \leq \alpha\) for all \(\alpha < \lambda\) such that for every set \(A\subseteq \lambda\), \(\{\alpha < \lambda: A\cap \alpha\in \mathcal S_\alpha\}\in F^+\).
\end{description}
Suppose \(\lambda\) is a regular uncountable cardinal and \(S\subseteq \lambda\) is a stationary set. Let \(\mathcal C\) denote the closed unbounded filter on \(\lambda\). Then \(\diamondsuit(S)\) abbreviates \(\diamondsuit(\mathcal C\restriction S)\) and \(\diamondsuit^-(S)\) abbreviates \(\diamondsuit^-(\mathcal C\restriction S)\). 
\end{defn}
Obviously \(\diamondsuit(F)\) implies \(\diamondsuit^-(F)\). On the other hand, \(\diamondsuit(U)\) cannot hold when \(U\) is an ultrafilter, while we will see that \(\diamondsuit^-(U)\) can, so \(\diamondsuit^-(F)\) does not imply \(\diamondsuit(F)\). On the other other hand, \(\diamondsuit^-(F)\) implies \(\diamondsuit^-(G)\) for any filter \(G\subseteq F\), so if \(\lambda\) is regular and \(F\) extends the closed unbounded filter on \(\lambda\), then \(\diamondsuit^-(F)\) implies \(\diamondsuit^-(S)\), and hence \(\diamondsuit(S)\), for every \(S\in F\). Here we use the following theorem of Kunen:
\begin{thm}[Kunen]\label{KunenDiamond}
Suppose \(\lambda\) is a regular uncountable cardinal and \(F\) is a normal filter on \(\lambda\). Then \(\diamondsuit^-(F)\) and \(\diamondsuit(F)\) are equivalent.\qed
\end{thm}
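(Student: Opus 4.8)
The easy direction is \(\diamondsuit(F)\Rightarrow\diamondsuit^-(F)\): a \(\diamondsuit(F)\)-sequence \(\langle S_\alpha\rangle\) yields the singleton family sequence \(\mathcal S_\alpha=\{S_\alpha\}\), which witnesses \(\diamondsuit^-(F)\) since \(|\mathcal S_\alpha|=1\le\alpha\). So the content is \(\diamondsuit^-(F)\Rightarrow\diamondsuit(F)\), and I would prove it by contraposition: assuming \(\diamondsuit(F)\) fails, I would show that no sequence \(\langle\mathcal S_\alpha:\alpha<\lambda\rangle\) with \(|\mathcal S_\alpha|\le\alpha\) can witness \(\diamondsuit^-(F)\). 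Given such a candidate, I would manufacture a single set \(X\subseteq\lambda\) that defeats it, that is, with \(\{\alpha:X\cap\alpha\in\mathcal S_\alpha\}\notin F^+\), equivalently \(\{\alpha:X\cap\alpha\notin\mathcal S_\alpha\}\in F\).

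First I would fix a bijection \(\pi:\lambda\times\lambda\to\lambda\) and let \(D\) be the club of \(\alpha<\lambda\) closed under \(\pi\), so that \(\pi[\alpha\times\alpha]=\alpha\); since \(F\) is normal it extends the club filter, so \(D\in F\). For \(X\subseteq\lambda\) and \(j<\lambda\) I write \((X)_j=\{\xi:\pi(j,\xi)\in X\}\) for the \(j\)-th slice, and I note that for \(\alpha\in D\) and \(j<\alpha\) one has \((X\cap\alpha)_j=(X)_j\cap\alpha\), so slicing commutes with initial-segment restriction on \(D\). Given the candidate \(\diamondsuit^-\)-sequence, I would fix once and for all an enumeration \(\mathcal S_\alpha=\{S^\alpha_j:j<\alpha\}\), which is possible because \(|\mathcal S_\alpha|\le\alpha\) (padding with repeats).

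Next, for each fixed \(j<\lambda\) I would consider the single-valued sequence \(T^j=\langle (S^\alpha_j)_j\cap\alpha:\alpha<\lambda\rangle\) (interpreting undefined entries as \(\emptyset\)). Because \(\diamondsuit(F)\) fails, \(T^j\) is not a \(\diamondsuit(F)\)-sequence, so there are \(X_j\subseteq\lambda\) and \(C_j\in F\) with \(X_j\cap\alpha\ne(S^\alpha_j)_j\) for all \(\alpha\in C_j\). I would then assemble these killers into one set by slices, letting \(X=\{\pi(j,\xi):j<\lambda,\ \xi\in X_j\}\), so that \((X)_j=X_j\). Finally I would combine the \(C_j\) by normality, setting \(C=\{\alpha:\alpha\in C_j\text{ for all }j<\alpha\}\), the diagonal intersection, which lies in \(F\). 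For \(\alpha\in C\cap D\) and any \(S\in\mathcal S_\alpha\), writing \(S=S^\alpha_j\) with \(j<\alpha\), the \(j\)-th slices satisfy \((X\cap\alpha)_j=X_j\cap\alpha\ne(S^\alpha_j)_j=(S)_j\), whence \(X\cap\alpha\ne S\). Thus \(X\cap\alpha\notin\mathcal S_\alpha\) for every \(\alpha\in C\cap D\in F\), which is exactly the desired defeat.

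The crux, and the step I expect to require the most care, is the bookkeeping of the budget. At level \(\alpha\) there are up to \(\alpha\) forbidden values in \(\mathcal S_\alpha\), while the slice decomposition supplies exactly \(\alpha\) independent coordinates \((\cdot)_j\) with \(j<\alpha\); assigning the \(j\)-th slice to kill the \(j\)-th guess is precisely what makes the hypothesis \(|\mathcal S_\alpha|\le\alpha\) exactly sufficient. Two points demand attention: first, arranging that slicing commutes with restriction, so that a global inequality \(X_j\cap\alpha\ne(S^\alpha_j)_j\) genuinely certifies \(X\cap\alpha\ne S^\alpha_j\), which forces me to work on the closure club \(D\); and second, merging the \(\lambda\)-many sets \(C_j\in F\) into one \(F\)-set while retaining \(\alpha\in C_j\) for every \(j<\alpha\), which is exactly a diagonal intersection and is the only place where normality of \(F\) is used beyond its containing the club filter. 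Everything else is routine.
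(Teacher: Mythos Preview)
The paper does not supply a proof of this theorem: it is quoted as a result of Kunen and closed with a bare \(\qed\). So there is no in-paper argument to compare against. Your proof is the standard Kunen argument, correctly adapted from the club filter to an arbitrary normal filter \(F\): code \(\alpha\)-many candidate guesses into a single sequence via a pairing function, kill each coordinate using the failure of \(\diamondsuit(F)\), and reassemble using a diagonal intersection. The two delicate points you flag---working on the closure club \(D\) so that slicing commutes with restriction, and invoking normality precisely to take the diagonal intersection of the \(C_j\)---are exactly right, and the argument goes through as written.

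One cosmetic remark: when you write ``\(X_j\cap\alpha\ne(S^\alpha_j)_j\) for all \(\alpha\in C_j\)'', what the failure of \(\diamondsuit(F)\) literally gives you is \(X_j\cap\alpha\ne T^j_\alpha=(S^\alpha_j)_j\cap\alpha\); the extra \(\cap\,\alpha\) is harmless since for \(\alpha\in D\) and \(j<\alpha\) one has \((S^\alpha_j)_j\subseteq\alpha\) anyway, but it would be cleaner to keep it in the statement and then drop it on \(D\).
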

We will be interested in the principle \(\diamondsuit^-(U)\) for ultrafilters \(U\) that are {\it not} normal. 
\begin{thm}\label{WeaklyNormalDiamond}
Suppose \(\lambda\) is a weakly inaccessible cardinal such that \(2^{<\lambda} = \lambda\). Suppose \(U\) is a weakly normal ultrafilter on \(\lambda\) that concentrates on ordinals \(\gamma\) such that \(2^{\textnormal{cf}(\gamma)}\leq \gamma\). Then \(\diamondsuit^-(U)\) holds.
\begin{proof}
Let \(\iota = \text{cf}^{M_U}(\sup j_U[\lambda])\). A theorem of Ketonen implies that every subset of \(M_U\) of cardinality \(\lambda\) is contained in an element of \(M_U\) of \(M_U\)-cardinality \(\iota\). Since \(|P_{\text{bd}}(\lambda)| = 2^{<\lambda} = \lambda\), there is a set \(C\in M_U\) such that \(|C|^{M_U} =\iota\) and \(j_U[P_\text{bd}(\lambda)]\subseteq C\). Let \(C^* = \{\bigcup A : A\in P^{M_U}(C)\}\). Then \(C^*\in M_U\) and \(|C^*|^{M_U} \leq (2^{\iota})^{M_U}\leq \sup j_U[\lambda]\) using Los's Theorem and the fact that \(U\) concentrates on ordinals \(\gamma\) such that \(2^{\textnormal{cf}(\gamma)}\leq \gamma\).

By construction, \(\{j_U(A)\cap \sup j_U[\lambda] : A\subseteq \lambda\}\subseteq C^*\). Fix \(\langle \mathcal A_\alpha : \alpha < \lambda\rangle\) such that \(C^* = [\langle \mathcal A_\alpha : \alpha < \lambda\rangle]_U\). Since \(|C^*| \leq \sup j_U[\lambda] = [\text{id}]_U\), by Los's Theorem, we may assume without loss of generality that \(|\mathcal A_\alpha|\leq \alpha\) for all \(\alpha < \lambda\). Moreover for any \(A\subseteq \lambda\), since \(j_U(A)\cap \sup j_U[\lambda]\in C^*\), we have that \(\{\alpha < \lambda : A\cap \alpha\in \mathcal A_\alpha\}\in U\). Thus \(\langle \mathcal A_\alpha : \alpha < \lambda\rangle\) witnesses \(\diamondsuit^-(U)\).
\end{proof}
\end{thm}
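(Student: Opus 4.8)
The plan is to build the $\diamondsuit^-(U)$-sequence by reflecting through the ultrapower $j_U \colon V \to M_U$ and reading off a guessing family from a single cleverly chosen element of $M_U$. The guiding idea is that the right object with which to ``guess'' the initial segment $A \cap \alpha$ of a set $A \subseteq \lambda$ is the trace $j_U(A) \cap \sup j_U[\lambda]$ of $j_U(A)$ below the supremum of the range of $j_U$. Indeed, a short computation with Los's Theorem shows that $j_U(A) \cap \sup j_U[\lambda] = [\langle A \cap \alpha : \alpha < \lambda\rangle]_U$ (if $\beta = [f]_U < [\textnormal{id}]_U$ then $f(\alpha) < \alpha$ a.e., and $\beta \in [\langle A\cap\alpha\rangle]_U$ iff $\{\alpha : f(\alpha) \in A\}\in U$ iff $\beta \in j_U(A)$). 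So if I can find a single set $C^* \in M_U$ containing every such trace that is small enough that $M_U \models |C^*| \leq \sup j_U[\lambda]$, then representing $C^* = [\langle \mathcal A_\alpha : \alpha < \lambda\rangle]_U$ will, by Los's Theorem, yield both $|\mathcal A_\alpha| \leq \alpha$ for $U$-almost every $\alpha$ and $\{\alpha < \lambda : A \cap \alpha \in \mathcal A_\alpha\} \in U$ for every $A$. The first point to record is that weak normality of $U$ gives $[\textnormal{id}]_U = \sup j_U[\lambda]$, so the target cardinality bound is exactly $|C^*|^{M_U} \leq [\textnormal{id}]_U$.

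To construct $C^*$ I will first cover the set of all traces by one small element of $M_U$. Every trace has the form $j_U(A) \cap \sup j_U[\lambda] = \bigcup_{\alpha < \lambda} j_U(A \cap \alpha)$, a union of $\lambda$-many members of $j_U[P_\textnormal{bd}(\lambda)]$. Since $2^{<\lambda} = \lambda$ we have $|P_\textnormal{bd}(\lambda)| = \lambda$, so $j_U[P_\textnormal{bd}(\lambda)]$ is a subset of $M_U$ of size $\lambda$, and Ketonen's covering theorem produces $C \in M_U$ with $j_U[P_\textnormal{bd}(\lambda)] \subseteq C$ and $|C|^{M_U} = \iota$, where $\iota = \textnormal{cf}^{M_U}(\sup j_U[\lambda])$. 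Setting $C^* = \{\bigcup B : B \in P^{M_U}(C)\}$, I then check that each trace lands in $C^*$: the set $B = \{x \in C : x \subseteq j_U(A) \cap \sup j_U[\lambda]\}$ is an $M_U$-subset of $C$ that contains every $j_U(A\cap\alpha)$, so $\bigcup B = j_U(A) \cap \sup j_U[\lambda]$ and therefore $j_U(A)\cap\sup j_U[\lambda] \in C^*$.

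The crux of the argument — and the step I expect to be the main obstacle — is the cardinality bound $|C^*|^{M_U} \leq \sup j_U[\lambda]$. Since $C^*$ is the image of $P^{M_U}(C)$ under the union map, $M_U$ computes $|C^*| \leq |P(C)| = 2^{|C|} = 2^\iota$. To bound $(2^\iota)^{M_U}$ by $\sup j_U[\lambda]$ I will invoke the concentration hypothesis: because $U$-almost every $\gamma$ satisfies $2^{\textnormal{cf}(\gamma)} \leq \gamma$, Los's Theorem gives $M_U \models 2^{\textnormal{cf}([\textnormal{id}]_U)} \leq [\textnormal{id}]_U$, and since $[\textnormal{id}]_U = \sup j_U[\lambda]$ has $M_U$-cofinality exactly $\iota$, this reads $(2^\iota)^{M_U} \leq \sup j_U[\lambda]$. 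This is the one place the concentration assumption is essential, and matching $\iota$ — defined externally as the $M_U$-cofinality of $\sup j_U[\lambda]$ — against the internally computed $2^{\textnormal{cf}}$ is the delicate bookkeeping.

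With the bound in hand the proof closes quickly. Writing $C^* = [\langle \mathcal A_\alpha : \alpha < \lambda\rangle]_U$ and $\sup j_U[\lambda] = [\textnormal{id}]_U = [\langle \alpha : \alpha < \lambda\rangle]_U$, the inequality $|C^*|^{M_U} \leq \sup j_U[\lambda]$ becomes, by Los's Theorem, $\{\alpha < \lambda : |\mathcal A_\alpha| \leq \alpha\} \in U$; modifying the sequence on a $U$-null set I may assume $|\mathcal A_\alpha| \leq \alpha$ for every $\alpha$. Finally, for any $A \subseteq \lambda$ the trace identity gives $[\langle A \cap \alpha\rangle]_U = j_U(A) \cap \sup j_U[\lambda] \in C^* = [\langle \mathcal A_\alpha\rangle]_U$, so one last application of Los's Theorem yields $\{\alpha < \lambda : A \cap \alpha \in \mathcal A_\alpha\} \in U$. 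Thus $\langle \mathcal A_\alpha : \alpha < \lambda\rangle$ witnesses $\diamondsuit^-(U)$.
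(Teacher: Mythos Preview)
Your proposal is correct and follows essentially the same approach as the paper: cover $j_U[P_{\textnormal{bd}}(\lambda)]$ by a set $C\in M_U$ of $M_U$-cardinality $\iota$ via Ketonen's covering theorem, take $C^*$ to be the $M_U$-closure of $C$ under unions, bound $|C^*|^{M_U}\leq (2^\iota)^{M_U}\leq [\textnormal{id}]_U$ using the concentration hypothesis and \L o\'s's Theorem, and read off the $\diamondsuit^-(U)$-sequence from a representing function for $C^*$. You supply more detail than the paper at several points---the explicit trace identity $j_U(A)\cap\sup j_U[\lambda]=[\langle A\cap\alpha\rangle]_U$, the verification via $B=\{x\in C:x\subseteq j_U(A)\cap\sup j_U[\lambda]\}$ that each trace lies in $C^*$, and the explicit invocation of weak normality for $[\textnormal{id}]_U=\sup j_U[\lambda]$---but the architecture is identical.
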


\begin{thm}\label{StronglyCompactDiamond}
Suppose \(\lambda\) is a regular cardinal with \(2^{<\lambda} = \lambda\). If some \(\delta\leq \lambda\) is \(\lambda\)-strongly compact, then \(\diamondsuit(\lambda)\) holds.
\end{thm}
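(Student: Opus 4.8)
The plan is to deduce \cref{StronglyCompactDiamond} from \cref{WeaklyNormalDiamond} by manufacturing a weakly normal ultrafilter of the right kind, after first disposing of the successor case. Suppose first that \(\lambda = \mu^+\) is a successor cardinal. Then \(2^{<\lambda} = \lambda\) just says \(2^\mu = \mu^+\). Since \(\delta\le\lambda\) is \(\lambda\)-strongly compact it is in particular measurable, hence a limit cardinal, so \(\mu\ge\delta>\omega\) and \(\mu\) is uncountable; therefore \(\diamondsuit(\mu^+)=\diamondsuit(\lambda)\) by Shelah's theorem that \(2^\mu=\mu^+\) implies \(\diamondsuit(\mu^+)\) for every uncountable \(\mu\). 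So from now on assume \(\lambda\) is weakly inaccessible.

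Using the \(\lambda\)-strong compactness of \(\delta\), fix a \(\delta\)-complete fine ultrafilter \(W\) on \(P_\delta(\lambda)\), with ultrapower embedding \(j\colon V\to M\), and set \(\eta=\sup j[\lambda]\). By fineness \(j[\lambda]\subseteq s^*\) for some \(s^*\in M\) with \(|s^*|^M<j(\delta)\); since \(\lambda\le|s^*|\le|s^*|^M<j(\delta)\le j(\lambda)\) and \(j(\lambda)\) is regular in \(M\), we get \(\eta<j(\lambda)\). Put \(U=\{A\subseteq\lambda:\eta\in j(A)\}\). Then \(U\) is a uniform ultrafilter on \(\lambda\); it is \(\delta\)-complete because \(\operatorname{crit}(j)=\delta\) (which follows from \(\delta\)-completeness of \(W\) together with \(j(\delta)>\lambda\ge\delta\)); and it is weakly normal, since if \(f\colon\lambda\to\lambda\) is regressive on a set in \(U\) then \(j(f)(\eta)<\eta=\sup j[\lambda]\), so \(j(f)(\eta)<j(\beta)\) for some \(\beta<\lambda\) and hence \(\{\gamma:f(\gamma)<\beta\}\in U\). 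A weakly normal uniform ultrafilter on a regular uncountable cardinal extends the closed unbounded filter \(\mathcal C\): if \(C\) were a club not in \(U\), then \(\gamma\mapsto\sup(C\cap\gamma)\) would be regressive on the \(U\)-set \(\lambda\setminus C\) (using that \(C\) is closed) yet unbounded there, contradicting weak normality. Finally, \(U\) concentrates on ordinals of cofinality \(<\delta\): since \(j[\lambda]\subseteq s^*\cap\eta\) is cofinal in \(\eta\) and \(|s^*\cap\eta|^M\le|s^*|^M<j(\delta)\), we have \(\textnormal{cf}^M(\eta)<j(\delta)\), i.e.\ \(\eta\in j(\{\gamma<\lambda:\textnormal{cf}(\gamma)<\delta\})\).

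It remains to verify that \(U\) concentrates on ordinals \(\gamma\) with \(2^{\textnormal{cf}(\gamma)}\le\gamma\). Granting this, \cref{WeaklyNormalDiamond} applies (as \(\lambda\) is weakly inaccessible, \(2^{<\lambda}=\lambda\), and \(U\) is weakly normal of the required form), giving \(\diamondsuit^-(U)\); since \(\mathcal C\subseteq U\), downward monotonicity of \(\diamondsuit^-\) in the filter gives \(\diamondsuit^-(\mathcal C)\); and then \cref{KunenDiamond}, applied to the normal filter \(\mathcal C\), gives \(\diamondsuit(\mathcal C)=\diamondsuit(\lambda)\). For the verification itself: \(U\) is \(\delta\)-complete and concentrates on \(\{\gamma:\textnormal{cf}(\gamma)<\delta\}\), so it suffices to show that for every regular \(\nu<\delta\) on which \(U\) concentrates one has \(2^\nu<\lambda\); indeed then \(\{\gamma:\gamma\ge 2^{\textnormal{cf}(\gamma)}\}\) contains the intersection of the fewer than \(\delta\) sets \(\{\gamma:\textnormal{cf}(\gamma)\ne\nu\}\cup\{\gamma:\gamma\ge 2^\nu\}\), each of which is in \(U\) by uniformity.

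The inequality \(2^\nu<\lambda\) is where I expect the real work to lie. It is immediate from \(2^{<\lambda}=\lambda\) whenever \(2^\nu<\lambda\) outright, and the only danger is the degenerate arithmetic in which \(2^\nu=\lambda\) for some \(\nu<\delta\) that \(U\) realizes as a cofinality (note that when \(2^{\aleph_0}=\lambda\) every cardinal \(\nu\in[\omega,\lambda)\) has \(2^\nu=\lambda\), and since \(U\) concentrates on limit ordinals it has nowhere "good" to sit). The plan is to rule this out: to show that \(\lambda\)-strong compactness of \(\delta\), together with \(\lambda\) being weakly inaccessible above \(\delta\), forces \(2^\nu<\lambda\) for all \(\nu<\delta\) — a compactness/covering argument in the spirit of Solovay's \(\mathrm{SCH}\) theorem — or, failing that, to choose the fine ultrafilter \(W\) (hence \(U\)) so as to avoid concentrating on any such cofinality. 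Establishing this arithmetic fact is the main obstacle; everything else above is routine once it is in hand.
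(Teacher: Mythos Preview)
Your route is the paper's: build a weakly normal \(U\) on \(\lambda\) concentrating on small cofinalities, invoke \cref{WeaklyNormalDiamond} to get \(\diamondsuit^-(U)\), then descend to the club filter via \cref{KunenDiamond}. You are simply more explicit than the paper about constructing \(U\) and about the successor case (the weak-inaccessibility hypothesis in \cref{WeaklyNormalDiamond} is not actually used in its proof, so your detour through Shelah is correct but unnecessary).

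The ``main obstacle'' you flag is illusory, and you already hold the key fact. In your successor-case paragraph you noted that \(\delta\) is measurable; hence \(\delta\) is a strong limit, so \(2^\nu<\delta\) for every \(\nu<\delta\). Assuming \(\delta<\lambda\), uniformity puts \(\{\gamma:\gamma\ge\delta\}\) in \(U\), and on its intersection with \(\{\gamma:\textnormal{cf}(\gamma)<\delta\}\) one has \(2^{\textnormal{cf}(\gamma)}<\delta\le\gamma\). That is the whole verification; no Solovay-style covering is needed. (Your ``fewer than \(\delta\) sets'' count is also off---an inaccessible \(\delta\) has \(\delta\) regular cardinals below it---but this is moot given the two-set argument just given.) The residual edge case \(\delta=\lambda\) makes \(\lambda\) measurable, hence subtle, so \(\diamondsuit(\lambda)\) holds directly; this case does need to be separated, since your \(U\) is then normal and the hypothesis of \cref{WeaklyNormalDiamond} genuinely fails for it.
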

\begin{proof}[Proof of \cref{StronglyCompactDiamond}]
Since \(\delta\) is \(\lambda\)-strongly compact, there is a weakly normal ultrafilter \(U\) on \(\lambda\) concentrating on ordinals of cofinality less than \(\delta\). Since \(U\) satisfies the hypotheses of \cref{WeaklyNormalDiamond}, \(\diamondsuit^-(U)\) holds. By the remarks preceding \cref{KunenDiamond}, it follows that \(\diamondsuit(A)\) holds for every \(A\in U\), and in particular, \(\diamondsuit(\lambda)\) holds.
\end{proof}

\begin{qst}
Suppose \(\kappa\) is a Mahlo cardinal above the least strongly compact cardinal. Must \(\diamondsuit(\text{Reg}\cap \kappa)\) hold?
\end{qst}
\section{Indecomposable ultrafilters and measurability}
Before turning to the proof of \cref{MainThm}, let us provide some basic definitions.
\begin{defn}
Suppose \(\lambda\) is a cardinal. An ultrafilter \(U\) is {\it \(\lambda\)-indecomposable} if any subset \(F\) of \(U\) with \(|F| \leq \lambda\) and \(\bigcap F = \emptyset\) contains a set \(G\) with \(|G| < \lambda\) and \(\bigcap G = \emptyset\).
\end{defn} 
Indecomposability can be seen as a local form of completeness: an ultrafilter \(U\) is \(\kappa\)-complete if and only if it is \(\lambda\)-decomposable for all cardinals \(\lambda < \kappa\). If \(\lambda\) is regular, then \(U\) is \(\lambda\)-indecomposable if and only if \(U\) is {\it \(\lambda\)-descendingly complete}, i.e., closed under decreasing intersections of length \(\lambda\). Perhaps the best way to think about indecomposability is in terms of the ultrapower construction:
\begin{lma}\label{DecomposableUltrapower}
Suppose \(\kappa\) is a regular cardinal. An ultrafilter \(U\) is \(\kappa\)-indecomposable if and only if \(j_U(\kappa)\) is the least upper bound of \(j_U[\kappa]\).\qed
\end{lma}

We now turn to some deeper facts about indecomposability. A key ingredient in the proof is the use of external ultrapowers of models of set theory. We begin by disambiguating some terms.
\begin{defn}
Suppose \(M\) is a model of set theory and \(X\in M\). 
\begin{itemize} 
	\item An {\it \(M\)-ultrafilter on \(X\)} is an ultrafilter over the Boolean algebra \(P^M(X)\).
	\item If \(U\) is an \(M\)-ultrafilter on \(X\) and \(\gamma\in M\), then \(U\) is {\it \(\gamma\)-complete over \(M\)} if for any \(F\subseteq U\) with \(F\in M\) and \(|F|^M < \gamma\), \((\bigcap F)^M\in U\).
	\item If \(U\) is an \(M\)-ultrafilter on \(X\) and \(\gamma\in M\), then \(U\) is {\it \(\gamma\)-amenable to \(M\)} if for any \(F\subseteq P^M(X)\) with \(|F|^M \leq \gamma\), \(F\cap U\in M\).
\end{itemize}
\end{defn}

One of the tools in the proof of our theorem is Silver's factorization theorem for indecomposable ultrafilters.
\begin{thm}[Silver]\label{SilverFactor}
Any indecomposable ultrafilter on a cardinal \(\kappa > 2^{\omega_1}\) factors as an external iterated ultrapower \((D,U)\) where \(D\) is an ultrafilter on \(\omega\) and \(U\) is an \(M_D\)-ultrafilter on \(j_D(\kappa)\) that is \(j_D(\gamma)\)-complete over \(M_D\) for all \(\gamma < \kappa\).\qed
\end{thm}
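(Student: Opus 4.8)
The plan is to split into two cases according to whether \(W\) is countably complete. If it is, then combined with indecomposability it is \(\kappa\)-complete: given any partition \(\langle S_\alpha:\alpha<\nu\rangle\) of a set in \(W\) with \(\nu<\kappa\), indecomposability produces a countable \(\sigma\) with \(\bigcup_{\alpha\in\sigma}S_\alpha\in W\), and countable completeness then yields a single \(\alpha\) with \(S_\alpha\in W\). So \(\kappa\) is measurable, and taking \(D\) to be the principal ultrafilter on \(\omega\) (so \(M_D=V\) and \(j_D=\mathrm{id}\)) and \(U=W\) exhibits the desired factorization. From now on assume \(W\) is not countably complete.

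Then \(W\) has an \(\omega\)-decomposition: a function \(\kappa\to\omega\) all of whose fibers are \(W\)-null. The first real step is to pick such a decomposition \(\pi\colon\kappa\to\omega\) \emph{maximally}, meaning that every \(h\colon\kappa\to\omega\) factors through \(\pi\) modulo \(W\) — i.e.\ \(h=c\circ\pi\) on a set in \(W\) for some \(c\colon\omega\to\omega\); equivalently, the partition \(\{\pi^{-1}(n):n<\omega\}\) is to be finest, modulo \(W\), among all countable partitions of \(\kappa\). To build it I would use indecomposability in the form: \emph{any partition of \(\kappa\) into fewer than \(\kappa\) \(W\)-null sets has a countable subfamily whose union lies in \(W\)}. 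This implies that the common refinement of at most \(2^{\omega_1}\) countable partitions of \(\kappa\) agrees, off a \(W\)-null set, with a countable partition, and a further argument bounding strictly refining chains then delivers a finest one. This is precisely where the hypothesis \(\kappa>2^{\omega_1}\) enters, and I expect the construction of the maximal decomposition to be the main obstacle.

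With \(\pi\) in hand, set \(D=\pi_*W\), a uniform ultrafilter on \(\omega\), and form the Rudin--Keisler factor \(k\colon M_D\to M_W\), \(k([c]_D)=[c\circ\pi]_W\), which is elementary with \(j_W=k\circ j_D\). Because every element of \(M_W\) has the form \([h]_W=j_W(h)([\mathrm{id}_\kappa]_W)=k(j_D(h))([\mathrm{id}_\kappa]_W)\) for some \(h\colon\kappa\to V\), and because \([\mathrm{id}_\kappa]_W<j_W(\kappa)=k(j_D(\kappa))\), the model \(M_W\) is generated over \(k[M_D]\) by the single ordinal \([\mathrm{id}_\kappa]_W\) below \(k(j_D(\kappa))\). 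So if \(U\) is the \(M_D\)-ultrafilter on \(j_D(\kappa)\) derived from \(k\) with seed \([\mathrm{id}_\kappa]_W\) — that is, \(X\in U\) iff \([\mathrm{id}_\kappa]_W\in k(X)\) — then the canonical factor map \(\mathrm{Ult}(M_D,U)\to M_W\) is surjective, hence an isomorphism, so \(k=j^{M_D}_U\) and \(j_W\) is realized as the external iterated ultrapower by \((D,U)\).

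Finally I would verify that \(U\) is \(j_D(\gamma)\)-complete over \(M_D\) for every \(\gamma<\kappa\). Unwinding the definition, this is equivalent to: for every \(\mu<j_D(\gamma)\) in \(M_D\), every ordinal of \(M_W\) below \(k(\mu)\) lies in \(k[\mu]\). Such an ordinal is \([h]_W\) for some \(h\colon\kappa\to\gamma\) with \([h]_W<k(\mu)\), so what is needed is that \(h\) factors through \(\pi\) modulo \(W\): indeed, if \(h=c\circ\pi\) on a set in \(W\) with \(c\colon\omega\to\gamma\), then \([h]_W=k([c]_D)\) and \([c]_D<\mu\) since \(k\) is order-preserving, so \([h]_W\in k[\mu]\). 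But \(h\colon\kappa\to\gamma\) with \(\gamma<\kappa\) is, by indecomposability, valued in a countable subset of \(\gamma\) on a set in \(W\); reindexing that subset by \(\omega\) and applying the maximality of \(\pi\), the required factorization follows. Combining the two cases proves the theorem.
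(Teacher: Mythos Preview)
The paper does not prove this theorem: it is stated as Silver's result and closed with a bare \(\qed\), so there is no ``paper's own proof'' to compare against. Your outline is in fact the classical Silver argument---produce a maximal \(\omega\)-decomposition \(\pi\), set \(D=\pi_*W\), factor \(j_W\) through \(j_D\), and derive \(U\) from the factor map---and your verification of \(j_D(\gamma)\)-completeness via ``every \(h\colon\kappa\to\gamma\) has countable essential range, hence factors through the maximal \(\pi\)'' is exactly the intended mechanism.

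There is one genuine slip in the sketch of the maximal decomposition. You write that the common refinement of at most \(2^{\omega_1}\) countable partitions is, mod \(W\), countable; but the common refinement of \(\lambda\) many countable partitions has up to \(\aleph_0^{\lambda}=2^{\lambda}\) nonempty pieces, and for \(\lambda=2^{\omega_1}\) this is \(2^{2^{\omega_1}}\), which need not be below \(\kappa\). What the hypothesis \(\kappa>2^{\omega_1}\) actually buys is that the common refinement of at most \(\omega_1\) countable partitions has at most \(2^{\omega_1}<\kappa\) pieces, and \emph{that} collapses to a countable partition by indecomposability. From there the chain argument can be completed: any strictly increasing \(\omega_1\)-chain of countable partitions (mod \(W\)) has a countable upper bound \(P\), and since every coarsening of a fixed countable partition is determined (mod \(W\)) by an equivalence relation on \(\omega\), there are at most \(2^{\aleph_0}\) elements of the chain below \(P\). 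Iterating this bound shows that strictly increasing chains have length at most \(2^{\aleph_0}\), hence every chain is bounded (a cofinal subchain of length \({\leq}\,\omega_1\) suffices), and Zorn's lemma produces the maximal \(\pi\). With this correction your argument goes through.
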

To prove \cref{MainThm}, we combine Silver's factorization theorem with the following fact, which is closely related to work of Woodin on the approximation and cover properties in the context of supercompact cardinals:
\begin{thm}\label{Absorption}
Suppose \(\delta\) is strongly compact, \(D\) is an ultrafilter on a set of cardinality less than \(\delta\), and \(U\) is an \(M_D\)-ultrafilter on \(X\) that is \(j_D(\gamma)\)-complete over \(M_D\) for all \(\gamma < \delta\). Then \(U\) belongs to \(M_D\).
\end{thm}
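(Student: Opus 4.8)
The plan is to use the strong compactness of $\delta$ to produce an elementary embedding $j\colon V\to N$ that sees $U$ externally, factor $j$ through $j_D$, and then argue that $N$ can recover $U$ inside $j(M_D)$. Write $\mathcal P = P^{M_D}(X)$ and $I=\mathrm{dom}(D)$.

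Using the strong compactness of $\delta$, I would fix an elementary embedding $j\colon V\to N$ with $\mathrm{crit}(j)=\delta$, with $N$ closed under sequences of length $<\delta$, and with $j[\mathcal P]\in N$ of $N$-cardinality below $j(\delta)$ (such $j$ exists since $\delta$ is $|\mathcal P|$-strongly compact). Since $|I|<\delta=\mathrm{crit}(j)$, the embedding fixes $D$: thus $D\in N$, $N$ agrees that $D$ is an ultrafilter on $I$, and — because $N^{|I|}\subseteq N$ — every function $I\to N$ already lies in $N$. Hence $N':=j(M_D)$ is exactly the externally formed ultrapower $\mathrm{Ult}(N,D)$, the restriction $k:=j\restriction M_D$ is elementary from $M_D$ into $N'$, and $k\circ j_D = j_D^N\circ j$, where $j_D^N\colon N\to N'$ is the $D$-ultrapower of $N$. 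Since $j_D$ and $D$ are definable and fixed by $j$, we have $U\in M_D$ if and only if $j(U)\in j(M_D)=N'$, so it suffices to show $j(U)\in N'$.

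Two facts narrow the task. First, $j(U)$ is an $N'$-ultrafilter on $j(X)$, and its trace $j[U]=j[\mathcal P]\cap j(U)$ belongs to $N$ (both factors do); so $N$ computes this subfamily of $j(U)$, of $N$-cardinality below $j(\delta)$. Second, pushing the hypothesis through $j$: because $\delta$ is inaccessible and $|I|<\delta$, one has $j_D(\delta)=\sup j_D[\delta]$, so "$j_D(\gamma)$-complete over $M_D$ for every $\gamma<\delta$" is precisely full $j_D(\delta)$-completeness over $M_D$; applying $j$, the ultrafilter $j(U)$ is $k(j_D(\delta))$-complete over $N'$, where $k(j_D(\delta))=j_D^N(j(\delta))$ lies strictly above $\mathrm{crit}(k)=j_D(\delta)$. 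Thus $j(U)$ is closed under $N'$-intersections of fewer than $k(j_D(\delta))$ of its members, while $j[U]$ is a subfamily of $j(U)$, sitting in $N$, that is "small" relative to $k(j_D(\delta))$.

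It then remains to combine these to place $j(U)$ in $N'=\mathrm{Ult}(N,D)$ — equivalently, to produce inside $N$ a $D$-decomposition $\langle U^\ast_i : i\in I\rangle$ with $j(U)=[\langle U^\ast_i\rangle]_D$. Here one uses the trace $j[U]$ together with the $j_D(\delta)$-completeness in the spirit of Woodin's analysis of the cover and approximation properties: the completeness should force $j(U)$ to be determined by its restriction to the small algebra on which $j[U]$ lives, so that $N$ — which sees $j[U]$ and is closed under $<\delta$-sequences — can build the decomposition. This last step is where I expect the genuine difficulty to lie, and it requires care because in the case relevant to Silver's factorization theorem, namely $D$ an ultrafilter on $\omega$, the models $M_D$ and $N'$ are ill-founded; all the embeddings above are then embeddings between ill-founded models, and the argument must be carried out in terms of their internal orders. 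In particular one cannot simply quote an approximation property of $M_D$ over $V$, and must instead exploit the concrete way $N'$ arises as the $D$-ultrapower of the well-founded model $N$.
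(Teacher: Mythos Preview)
Your reduction is correct but does not actually advance the argument. By elementarity, $U\in M_D$ is equivalent to $N\models j(U)\in j(M_D)$, and since $D$ is fixed this is exactly the assertion $j(U)\in N'$. But in $N$ one now faces precisely the original problem: $j(\delta)$ is strongly compact, $D$ lies on a set of $N$-cardinality below $j(\delta)$, and $j(U)$ is an $N'$-ultrafilter that is $j_D^N(\gamma)$-complete over $N'$ for every $\gamma<j(\delta)$. So the passage to $N$ is a restatement, not a simplification, unless you genuinely exploit the new datum $j[U]\in N$.

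That is where the gap lies. The containment $j[U]\subseteq j(U)$ is strict in general, and there is no reason for $j[U]$ to be a base for the $N'$-ultrafilter $j(U)$: given an arbitrary $B\in P^{N'}(j(X))$ lying in $j(U)$, one cannot expect any $j(A)$ with $A\in U$ (or any $N'$-small intersection of such sets) to sit below $B$. Amenability arguments will give you $F\cap j(U)\in N'$ for $N'$-small $F$, but this only recovers small fragments of $j(U)$, not the whole object. The ``last step'' you flag is therefore not a matter of bookkeeping with ill-founded models; it is the entire content of the theorem, and your setup has not reduced its difficulty.

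The paper's route is quite different and avoids the embedding form of strong compactness. First one shows directly that $U$ generates a $\delta$-complete filter in $V$: given $G\subseteq U$ with $|G|<\delta$, cover $G$ by some $F\in M_D$ of $M_D$-cardinality below $j_D(\delta)$ (elementary covering for ultrapowers), use a Kunen-type amenability lemma (available because $j_D(\delta)$ is inaccessible in $M_D$) to get $F\cap U\in M_D$, and conclude $\bigcap G\supseteq\bigcap(F\cap U)\in U$. Second, the filter-extension property of strong compactness extends $U$ to a $\delta$-complete ultrafilter $Z$ on $X$ in $V$. Third --- and this is the real ingredient --- a version of Kunen's commuting ultrapowers lemma shows that whenever $D$ lies on a set of size $\gamma$ and $Z$ is $\gamma^+$-complete, the trace $Z\cap P^{M_D}(X)$ belongs to $M_D$. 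Since $U=Z\cap P^{M_D}(X)$, this finishes. Your framework supplies no analogue of this last lemma, and that is the missing idea.
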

We will use the following basic fact, strengthenings of which are fundamental lemmas in the theory of strong compactness:
\begin{lma}\label{TrivialCovering}
Suppose \(D\) is an ultrafilter on a set of cardinality \(\gamma\). Then any \(\sigma\subseteq M_D\) with \(|\sigma| \leq \gamma\) is contained in some \(\tau\in M_D\) with \(|\tau|^{M_D} \leq j_D(\gamma)\).
\begin{proof}
Let \(\eta = |\sigma|\), and fix functions \(\langle F(\alpha) : \alpha < \eta\rangle\) such that \(\sigma = \{[F(\alpha)]_D : \alpha < \eta\}\). Since \([F(\alpha)]_D = j(F(\alpha))([\text{id}]_D) = j(F)(j(\alpha))([\text{id}]_D)\), we have \(\sigma\subseteq \{j_D(F)(\beta)([\text{id}]_D) : \beta < j_D(\eta)\}\). Letting \(\tau\) be this set, observe that \(\tau\in M_D\), \(\sigma\subseteq \tau\), and \(|\tau|^{M_D} \leq j_D(\eta) < j_D(\delta)\).
\end{proof}
\end{lma}

We use a second lemma which generalizes the ``ancient Kunen argument'':
\begin{lma}[Kunen]\label{AncientKunen}
Suppose \(U\) is an \(M\)-ultrafilter on \(X\) and \(\gamma\) is an \(M\)-cardinal. Then the following are equivalent:
\begin{enumerate}[(1)]
\item \(U\) is \(\gamma\)-amenable to \(M\).
\item For any \(A\subseteq j_U(\gamma)\) with \(A\in\textnormal{Ult}(M,U)\), \(j_U^{-1}[A]\in M\).
\end{enumerate}
\begin{proof}
(1) implies (2): Fix \(A\subseteq j_U(\gamma)\) with \(A\in\textnormal{Ult}(M,U)\). There is some \(f\) such that \(A = [f]_U\). For \(\xi < \gamma\), we have \(\xi\in j_U^{-1}[A]\) if and only if the set \(A_\xi = \{x\in X : \xi\in f(x)\}\) belongs to \(U\). But \(\gamma\)-amenability yields that \(\{A_\xi : \xi < \gamma\}\cap U\in M\). It follows that \(j_U^{-1}[A] = \{\xi : A_\xi\in U\}\in M\).

(2) implies (1): Suppose \(F\subseteq P^M(X)\) and \(|F|^M \leq \gamma\). Enumerate \(F\) in \(M\) as \(\langle S_\xi : \xi < \gamma\rangle\). Let \(\langle T_\xi : \xi < j_U(\gamma)\rangle = j_U(\langle S_\xi : \xi < \gamma\rangle)\). Define \(A = \{\xi < j_U(\gamma) : [\text{id}]_U\in T_\xi\}\). Then \(S_\xi\in U\) if and only if \([\text{id}]_U\in j_U(S_\xi) = T_{j_U(\xi)}\), or equivalently if and only if \(j_U(\xi)\in A\). By (2), \(j_U^{-1}[A]\in M\). Therefore \(F\cap U = \{S_\xi : \xi\in j_U^{-1}[A]\}\in M\).
\end{proof}
\end{lma}
As a corollary, we obtain in the general case a weakening of the usual Kunen lemma from the wellfounded context:
\begin{cor}[Kunen]\label{AncientKunen2}
Suppose \(M\) is a model of set theory, \(\gamma\) is an \(M\)-cardinal, and \(U\) is an \(M\)-ultrafilter that is \(\eta\)-complete over \(M\) where \(\eta = ((2^\gamma)^+)^M\). Then \(U\) is \(\gamma\)-amenable to \(M\).
\begin{proof}
Let \(j : M \to N\) be the ultrapower of \(M\) by \(U\). Since \(U\) is \(\eta\)-complete over \(M\) where \(|P^M(\gamma)|^M < \eta\), \(j\) restricts to a surjection from \(P^M(\gamma)\) onto \(P^N(j_U(\gamma))\). For any \(A\subseteq j(\gamma)\) with \(A\in N\), there is some \(B\in M\) with \(j(B) = A\). By elementarity, we must have \(B = j^{-1}[A]\). Hence \(j^{-1}[A]\in M\). By \cref{AncientKunen}, \(U\) is \(\gamma\)-amenable to \(M\).
\end{proof}
\end{cor}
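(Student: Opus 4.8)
The plan is to reduce to the clean criterion provided by \cref{AncientKunen}: rather than checking amenability directly, I would verify its equivalent formulation (2), namely that \(j_U^{-1}[A] \in M\) for every \(A \subseteq j_U(\gamma)\) belonging to \(N = \textnormal{Ult}(M,U)\). The whole argument hinges on the observation that \(\eta\)-completeness over \(M\), with the threshold \(\eta = ((2^\gamma)^+)^M\) chosen just above \(|P^M(\gamma)|^M = (2^\gamma)^M\), is strong enough to force the ultrapower embedding \(j_U\) to restrict to a \emph{surjection} of \(P^M(\gamma)\) onto \(P^N(j_U(\gamma))\).

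First I would record the ``small partition'' reformulation of completeness: if \(\langle B_i : i < \mu\rangle \in M\) partitions \(X\) into \(\mu < \eta\) blocks, then exactly one block lies in \(U\). This follows from the intersection form of the definition, since otherwise the complements of all the blocks would form a subfamily of \(U\) of \(M\)-cardinality below \(\eta\) whose intersection is empty. The surjectivity claim is then immediate: given \(A \in P^N(j_U(\gamma)) = j_U(P^M(\gamma))\), write \(A = [f]_U\) and use Los's theorem to arrange \(f(x) \in P^M(\gamma)\) for all \(x\). The level sets of \(f\) partition \(X\) into at most \(|P^M(\gamma)|^M = (2^\gamma)^M < \eta\) blocks, so \(f\) is constant, equal to some fixed \(B \in P^M(\gamma)\), on a set in \(U\); hence \(A = [f]_U = j_U(B)\).

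With the surjection established, the rest is pure elementarity. Given \(A \subseteq j_U(\gamma)\) in \(N\), fix \(B \in P^M(\gamma)\) with \(A = j_U(B)\). For each \(\xi < \gamma\), elementarity gives \(j_U(\xi) \in j_U(B)\) iff \(\xi \in B\), so \(j_U^{-1}[A] = j_U^{-1}[j_U(B)] = B \in M\). This is exactly condition (2) of \cref{AncientKunen}, and the lemma then yields that \(U\) is \(\gamma\)-amenable to \(M\).

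I expect the only real subtlety to be the surjectivity step, and within it the careful bookkeeping on cardinalities: one must confirm that the level-set partition genuinely is an element of \(M\) of \(M\)-cardinality strictly below \(\eta\), which is precisely what the choice \(\eta = ((2^\gamma)^+)^M\) guarantees since \(P^M(\gamma)\) has \(M\)-cardinality \((2^\gamma)^M\). Everything downstream of the surjection is routine.
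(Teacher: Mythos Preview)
Your proof is correct and follows exactly the paper's approach: reduce to condition (2) of \cref{AncientKunen} by showing that \(j_U\) restricts to a surjection from \(P^M(\gamma)\) onto \(P^N(j_U(\gamma))\), then use elementarity to identify \(j_U^{-1}[A]\) with the preimage \(B\). The only difference is that you spell out the surjectivity step via the level-set partition and the small-partition reformulation of \(\eta\)-completeness, whereas the paper simply asserts the surjection and leaves this standard unpacking to the reader.
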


The following is a version of Kunen's Commuting Ultrapowers Lemma for countably incomplete ultrafilters:
\begin{lma}\label{KunenCommute}
Suppose \(D\) is an ultrafilter on a set of size \(\gamma\) and \(Z\) is a \(\gamma^+\)-complete ultrafilter on a set \(X\in M_D\). Then \(Z\cap P^{M_D}(X)\) belongs to \(M_D\).\qed
\end{lma}
We omit the proof, which will appear in another paper; an elegant exposition would require introducing some terminology we would like to avoid.

\begin{proof}[Proof of \cref{Absorption}]
We first show that \(U\) generates a \(\delta\)-complete filter on \(X\). Suppose \(G\subseteq U\) and \(|G| < \delta\). We must show that \(\bigcap G\neq \emptyset\). By \cref{TrivialCovering}, \(G\) is contained in some \(F\in M_D\) with \(|F|^{M_D} < j_D(\delta)\). By \cref{AncientKunen2}, \(F\cap U\in M\). Since \(U\) is \(\delta\)-complete over \(M\), it follows that \(\bigcap (F\cap U)\in U\). But \(G\subseteq F\cap U\), so \(\bigcap G\neq \emptyset\), as desired.

Since \(\delta\) is strongly compact, \(U\) can be extended to a \(\delta\)-complete ultrafilter \(Z\). But \(U = Z\cap P^{M_D}(X)\) belongs to \(M_D\) by \cref{KunenCommute}.
\end{proof}

Using \cref{Absorption}, we can finally prove our main theorem on indecomposable ultrafilters.
\begin{thm}\label{MainThm}
If \(\kappa\) lies above the first strongly compact cardinal and carries an indecomposable ultrafilter, then \(\kappa\) is either measurable or a limit of measurable cardinals of countable cofinality. 
\end{thm}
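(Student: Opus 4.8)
The plan is to peel the given ultrafilter apart using Silver's factorization theorem (\cref{SilverFactor}), to identify its highly complete part as an honest measure inside an ultrapower of \(V\) by an ultrafilter on \(\omega\) by means of \cref{Absorption}, and then to reflect the completeness of that measure back down to \(V\).

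In detail: let \(\delta\) be the least strongly compact cardinal and let \(W\) be a uniform indecomposable ultrafilter on \(\kappa > \delta\). Since \(\delta\) is inaccessible, \(\kappa > \delta > 2^{\omega_1}\), so \cref{SilverFactor} applies: \(W\) factors as an external iterated ultrapower \((D, U)\), where \(D\) is an ultrafilter on \(\omega\) and \(U\) is an \(M_D\)-ultrafilter on \(j_D(\kappa)\) that is \(j_D(\gamma)\)-complete over \(M_D\) for every \(\gamma < \kappa\). Since \(|\omega| < \delta\) and \(U\) is in particular \(j_D(\gamma)\)-complete over \(M_D\) for every \(\gamma < \delta\), \cref{Absorption} yields \(U \in M_D\); thus, from the standpoint of \(M_D\), \(U\) is a genuine ultrafilter on \(j_D(\kappa)\) that is \(j_D(\gamma)\)-complete for all \(\gamma < \kappa\). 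I would first record that \(U\) is nonprincipal in \(M_D\): recovering \(W\) from the factorization as \(\{X \subseteq \kappa : j_D(X) \in U\}\), if \(U\) were the principal ultrafilter generated by \(\{[g]_D\}\) for some \(g \colon \omega \to \kappa\), then \(W\) would equal the pushforward \(g_*(D)\), and so the countable set \(\mathrm{ran}(g)\) would belong to \(W\), contradicting uniformity.

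Now let \(\mu\) denote the completeness of \(U\) as computed in \(M_D\). From \(j_D(\gamma)\)-completeness for all \(\gamma < \kappa\) we get \(\mu \geq \sup j_D[\kappa]\), while the fact that \(U\) is a nonprincipal ultrafilter on \(j_D(\kappa)\) in \(M_D\) gives \(\mu \leq j_D(\kappa)\). As \(\mu \geq \sup j_D[\kappa] \geq j_D(\omega_1) > \omega_1\), \(\mu\) is an uncountable cardinal of \(M_D\) arising as the completeness of a nonprincipal ultrafilter, so \(M_D \models\) ``\(\mu\) is measurable''. If \(\mu = j_D(\kappa)\), then \(M_D \models\) ``\(j_D(\kappa)\) is measurable'', and elementarity of \(j_D \colon V \to M_D\) gives that \(\kappa\) is measurable, which is the first alternative. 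Otherwise \(\mu < j_D(\kappa)\), so \(\mu = [f]_D\) for some \(f \colon \omega \to \kappa\); by \L o\'s's theorem, \(A = \{n < \omega : f(n) \textnormal{ is measurable}\} \in D\), while \(\mu \geq \sup j_D[\kappa]\) forces \(\{n < \omega : f(n) > \gamma\} \in D\) for every \(\gamma < \kappa\), so each such set meets \(A\). Hence \(\{f(n) : n \in A\}\) is a countable set of measurable cardinals cofinal in \(\kappa\), so \(\kappa\) is a limit of measurable cardinals of countable cofinality, which is the second alternative.

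Most of the real content is packaged inside the two cited theorems, so the main thing to execute carefully is their interface: checking that Silver's factorization delivers exactly the completeness hypothesis that \cref{Absorption} needs once one restricts to \(\gamma < \delta\), and then the bookkeeping showing \(\sup j_D[\kappa] \leq \mu \leq j_D(\kappa)\) together with the fact that \(\mu\) is a genuine measurable cardinal of \(M_D\). The final case split --- pulling \(\mu\) back by elementarity when \(\mu = j_D(\kappa)\), and otherwise noting that any representing function \(f \colon \omega \to \kappa\) for \(\mu\) has range cofinal in \(\kappa\) and measurable on a \(D\)-large set --- is then precisely the dichotomy asserted.
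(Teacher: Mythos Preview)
Your proof is correct and follows essentially the same route as the paper: Silver's factorization, then \cref{Absorption} to internalize \(U\), then a case split on whether the completeness/critical point of \(U\) equals \(j_D(\kappa)\). The only cosmetic differences are that you explicitly verify nonprincipality of \(U\) (which the paper tacitly assumes) and that in the second case you represent \(\mu\) as \([f]_D\) to directly extract a countable cofinal set of measurables, whereas the paper separates this into an elementarity argument for ``limit of measurables'' plus an appeal to \cref{DecomposableUltrapower} for \(\textnormal{cf}(\kappa)=\omega\).
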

\begin{proof}[Proof of \cref{MainThm}]
By \cref{SilverFactor}, any indecomposable ultrafilter on \(\kappa\) gives rise to an external iterated ultrapower \((D,U)\) where \(D\) is an ultrafilter on \(\omega\) and \(U\) is an \(M_D\)-ultrafilter on \(j_D(\kappa)\) that is \(j_D(\gamma)\)-complete over \(M_D\) for all \(\gamma < \kappa\). By \cref{Absorption}, \(U\) belongs to \(M_D\); in other words, this external iterated ultrapower is actually internal. 

Working in \(M_D\), let \(\eta\) be the critical point of \(U\). Since \(U\) lies on \(j_D(\kappa)\), \(\eta\leq j_D(\kappa)\). Since \(U\) is \(j_D(\gamma)\)-complete for all \(\gamma < \kappa\), \(\eta > j_D(\gamma)\) for all \(\gamma < \kappa\). In particular, \(\eta\) is uncountable in \(M_D\), so \(\eta\) is a measurable cardinal of \(M_D\). 

If \(\eta = j_D(\kappa)\), then \(\kappa\) is measurable by elementarity. Therefore assume \(\eta < j_D(\kappa)\). The usual argument implies that \(\kappa\) is a limit of measurable cardinals: for any \(\gamma < \kappa\), \(M_D\) thinks there is a measurable cardinal strictly in between \(j_D(\gamma)\) and \(j_D(\kappa)\), so by elementarity, there is a measurable cardinal strictly between \(\gamma\) and \(\kappa\). The existence of \(\eta\) implies that \(j_D(\kappa) \) is not the least upper bound of \(j_D[\kappa]\), and hence \(D\) is \(\text{cf}(\kappa)\)-decomposable by \cref{DecomposableUltrapower}. Since \(D\) is an ultrafilter on \(\omega\), the only way this can happen is if \(\text{cf}(\kappa) = \omega\).
\end{proof}

\bibliography{Bibliography.bib}
\bibliographystyle{unsrt}
\end{document}